\documentclass[letterpaper, 10 pt, conference]{ieeeconf} 
\usepackage{amsmath}
\usepackage{amsfonts}
\usepackage{xcolor}
\usepackage{graphicx}
\usepackage[percent]{overpic}

\newtheorem{proposition}{Proposition}

\IEEEoverridecommandlockouts                              
\title{\LARGE \bf
Safety-constrained position tracking of a flexible transmission system*
}

\author{Anthony Hastir$^{1}$ and Timo Reis$^{2}$
\thanks{*This research was conducted with the financial support of the Deutsche Forschungsgemeinschaft (DFG, German Research Foundation). T.~Reis is supported by the DFG - Project-ID 362536361, \textit{Adaptive
control of coupled rigid and flexible multibody systems with port-Hamiltonian structure}. All the authors acknowledge funding by the DFG, Germany
- Project-ID 531152215 - CRC 1701.}
\thanks{$^{1}$Anthony Hastir is with the Department of Mathematics and naXys,
        University of Namur, 5000 Namur, Belgium
        {\tt\small anthony.hastir@unamur.be}}%
\thanks{$^{2}$Timo Reis is with the Institut f\"ur Mathematik,
Technische Universit\"at Ilmenau, Weimarer Stra\ss e 25,
98693 Ilmenau, Germany
        {\tt\small timo.reis@tu-ilmenau.de}}%
}

\begin{document}

\maketitle
\thispagestyle{empty}
\pagestyle{empty}

\begin{abstract}
We consider safety-constrained position tracking for a flexible
transmission system consisting of a torsional shaft driven by a motor
and connected to a gravitationally loaded inertia. The coupled
PDE--ODE dynamics admit a semilinear formulation in which the linear
input--output system is impedance passive and the gravitational torque
enters as a bounded, globally Lipschitz perturbation. We establish an
energy dissipation inequality for the nonlinear model and show that a
position-funnel controller with supervised damping admits a unique
global closed-loop trajectory. In particular, the motor-angle tracking
error remains strictly inside a prescribed time-varying funnel, which
can be chosen to enforce safety bounds on the motor angle.
\end{abstract}

\section{INTRODUCTION}
We study position tracking for a flexible transmission system
consisting of a torsional shaft with a motor at one endpoint and a
rigid load at the other. The control input is the motor torque,
while gravity exerts a nonlinear torque on the load. The controlled
output is the motor angle. The configuration is illustrated in
Figure~\ref{fig:flexible_transmission}.

\begin{figure}[b]
    \centering
    \begin{overpic}[
        width=\linewidth,
    ]{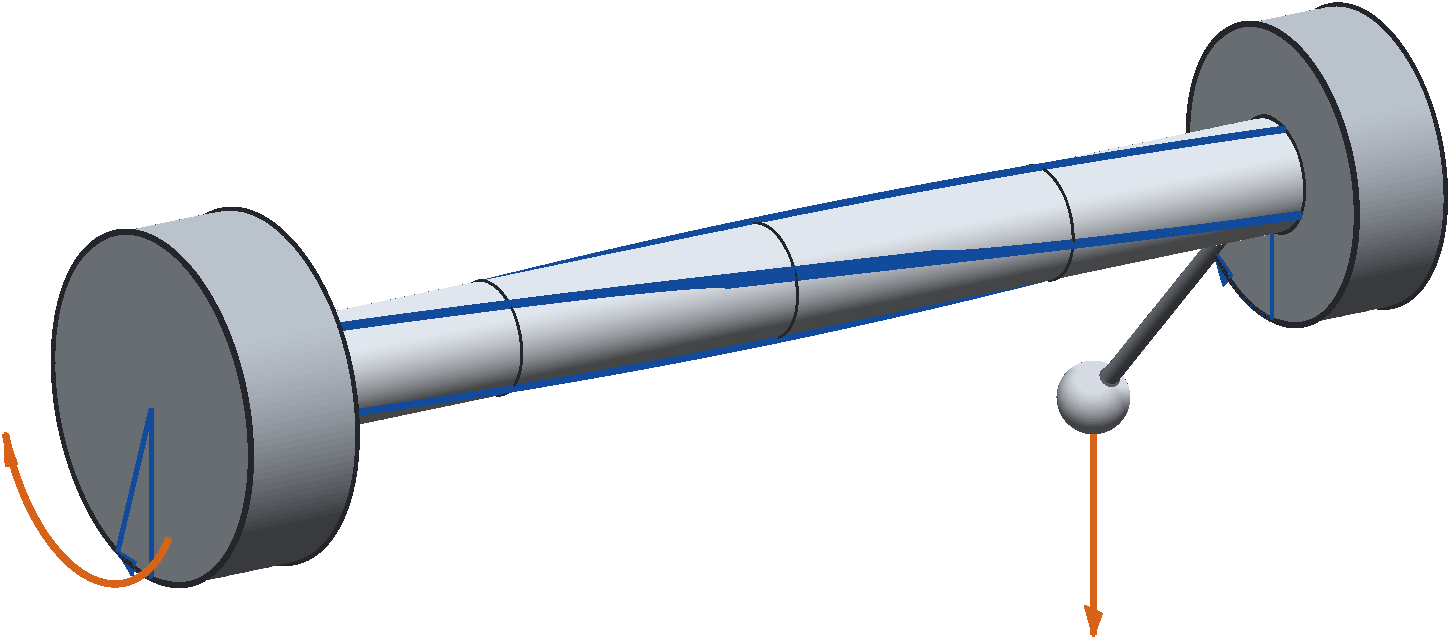}
        \put(-4,18){\color{orange!85!black}$u(t)$}
        \put(8,18){\color{blue!70!black}$y(t) = q(t,0)$}
        \put(15,0){$J_m,\ d_m$}

        \put(48,32){\color{blue!70!black}$q(t,x)$}
        \put(7,32){$x=0$}
        \put(88,45){$x=L$}

        \put(86,18){$J_l,\ d_l$}
        \put(84,39){\color{blue!70!black}$q(t,L)$}
        \put(73,20){$m$}
        \put(77,2){\color{orange!85!black}$mg$}
    \end{overpic}
    \caption{Flexible transmission system with motor torque input $u$ and
    controlled motor angle $y=q(\,\cdot\,,0)$.}
    \label{fig:flexible_transmission}
\end{figure}

Viewed in an abstract way, this model can be written as a nonlinear infinite-dimensional system, in which the nonlinearity comes from the gravitational force.

An important property of the model is its dissipativity with
respect to the supply rate $u(t)y_v(t)$, where $y_v$ denotes the
motor's angular velocity. Thus, the passive output is a velocity,
whereas the output to be tracked is the motor angle $y$.
We assume that both the motor angle and its angular velocity
are available for feedback.

This distinction between the passive velocity output and the
controlled position output is characteristic of many mechanical
systems. In particular, the two outputs are related by
$\dot y(t)=y_v(t)$, which motivates the use of position-funnel
control.

We use funnel control to prescribe the position tracking error, that is, $e(t) = y(t)-y_{\rm ref}(t)$, where $y_{\rm ref}(t)$ is some reference signal. The main idea behind funnel control is to force the tracking error to remain within some prescribed funnel boundaries for all nonnegative times. Mathematically speaking, by denoting the funnel boundaries by $\pm \varphi(t)^{-1}$ for some function $\varphi(t)$, our control objective is $\varphi(t)\Vert e(t)\Vert < 1$ for all $t\geq 0$. Although asymptotic tracking is not necessarily guaranteed, the
prescribed funnel bounds the tracking error throughout the entire
time horizon. In particular, choosing the funnel inside a prescribed
interval of admissible motor angles provides a direct mechanism
for enforcing safety constraints.

Funnel control was introduced in \cite{IlcRyaSan2002}; see
\cite{IlchmannRyan2008,BerIlcRya2021} for finite-dimensional
results and surveys. Infinite-dimensional applications include
heat equations \cite{ReiSel2015}, moving water tanks \cite{Ber2022},
nonlinear systems \cite{HasWinDoc2023}, and systems with
infinite-dimensional internal dynamics \cite{Ber2020}.
Passive systems were studied in
\cite{MarTimFel2021,GovHasPauRei2025}, and position-funnel
tracking was developed in \cite{HastirReisPosition}.

Here we extend the position-funnel approach to a flexible
transmission with nonlinear gravitational loading. The linear
shaft--motor--load dynamics are impedance passive, while the
gravitational torque is a bounded, globally Lipschitz perturbation.
We establish an energy dissipation inequality and prove the
existence of a unique global closed-loop trajectory satisfying
a strict position-funnel bound, thereby enforcing the prescribed
motor-angle safety constraint.

Section~\ref{sec:Model} describes the model, Section~\ref{sec:Pos_Tracking}
presents the controller and its analysis, and Section~\ref{sec:Simulation}
gives the numerical illustration.
\section{MODEL DESCRIPTION}\label{sec:Model}
Let $q(t,x)$ denote the torsional angle along the shaft. The PDE model that we consider is given by
\begin{equation}
    \rho \tfrac{\partial^2 q}{\partial t^2}(t,x) = \tfrac{\partial}{\partial x}\left(GJ\, \tfrac{\partial q}{\partial x}(t,x)\right),\; t\geq 0, x\in (0,L),
    \label{eq:Angle_PDE}
\end{equation}
where $\rho > 0$ is the distributed rotational inertia and $k := GJ>0$ is the torsional rigidity of the shaft. In addition, we complement \eqref{eq:Angle_PDE} with the following boundary input and boundary condition
\begin{align}
    J_m \tfrac{\partial^2 q}{\partial t^2}(t,0) &= u(t) + k \tfrac{\partial q}{\partial x}(t,0) - d_m \tfrac{\partial q}{\partial t}(t,0),
    \label{eq:Boundary_input}\\
    J_l \tfrac{\partial^2 q}{\partial t^2}(t,L) &= - k \tfrac{\partial q}{\partial x}(t,L) - d_l \tfrac{\partial q}{\partial t}(t,L)\nonumber\\&\qquad - m g l\sin(q(t,L)),
\label{eq:Boundary_Cond}
\end{align}
respectively, which model rigid inertias attached to the two end points. In \eqref{eq:Boundary_input} and \eqref{eq:Boundary_Cond}, $u$ represents the motor torque, while $J_m>0, J_l>0$, $d_m>0$ and $d_l>0$ are the motor rotational inertia, the load rotational inertia, the motor damping coefficient and the load damping coefficient, respectively. The positive constants $m, g$ and $l$ are the mass of the load, the gravitational acceleration and the distance from the load's rotation axis to its center of mass, respectively. We assume that the motor angle and its angular velocity are measured.
The position output is
\begin{equation}
    y(t) = q(t,0).
    \label{eq:pos_output}
\end{equation}
Furthermore, we consider an additional output, given by 
\[
y_v(t) = \tfrac{\partial q}{\partial t}(t,0).
\]
In what follows, we refer to $y_v$ as the velocity output. 

We represent the linear part of
\eqref{eq:Angle_PDE}--\eqref{eq:Boundary_Cond}
as a system node \cite{sta2005}, treating gravity as
a nonlinear perturbation. Introduce
\begin{align*}
    z_1(t,x) &= \tfrac{\partial q}{\partial x}(t,x),\qquad
    z_2(t,x) = \rho\tfrac{\partial q}{\partial t}(t,x),\\
    p_m(t) &= J_m\tfrac{\partial q}{\partial t}(t,0),\qquad
    p_l(t) = J_l\tfrac{\partial q}{\partial t}(t,L).
\end{align*}
We consider the Hilbert state space
$\mathcal{X}:=\mathrm{L}^2(0,L;\mathbb{R}^2)\times\mathbb{R}^2$
equipped with the inner product
\begin{align*}
    \left\langle
    \left(\begin{smallmatrix}z\\ p\end{smallmatrix}\right),
    \left(\begin{smallmatrix}\tilde z\\ \tilde p\end{smallmatrix}\right)
    \right\rangle_{\mathcal X}
    =
    \langle z,\mathcal H\tilde z\rangle_{\mathrm{L}^2(0,L;\mathbb{R}^2)}
    +
    \langle p,\mathcal J\tilde p\rangle_{\mathbb{R}^2},
\end{align*}
where
\[
    \mathcal H=
    \left(\begin{smallmatrix}
        k&0\\0&\rho^{-1}
    \end{smallmatrix}\right),
    \qquad
    \mathcal J=
    \left(\begin{smallmatrix}
        J_m^{-1}&0\\0&J_l^{-1}
    \end{smallmatrix}\right).
\]
With these definitions, the velocity output is
\[
    y_v(t)=J_m^{-1}p_m(t).
\]
Now define the operator
\[
    S=\left[\begin{smallmatrix}A\&B\\ C\&D\end{smallmatrix}\right]
\]
with $A\&B:D(S)\subset\mathcal X\times\mathbb R\to\mathcal X$ and
$C\&D:D(S)\to\mathbb R$ by
\begin{align*}
    A\&B
    \left(
    \begin{matrix}
        \left(\begin{smallmatrix}z\\p\end{smallmatrix}\right)\\ u
    \end{matrix}
    \right)
    =
    \left(
    \begin{smallmatrix}
        \frac{\mathrm d(\rho^{-1}z_2)}{\mathrm dx}\\[1mm]
        \tfrac{\mathrm d(kz_1)}{\mathrm dx}\\[1mm]
        u+(kz_1)(0)-d_mJ_m^{-1}p_m\\[1mm]
        -(kz_1)(L)-d_lJ_l^{-1}p_l
    \end{smallmatrix}
    \right)
\end{align*}
and
\begin{align*}
    C\&D
    \left(
    \begin{smallmatrix}
        \left(\begin{smallmatrix}z\\p\end{smallmatrix}\right)\\ u
    \end{smallmatrix}
    \right)
    =J_m^{-1}p_m,
\end{align*}
where $z=(z_1,z_2)^\top$ and $p=(p_m,p_l)^\top$, with
\begin{align*}
D(S)=\Bigg\{
&\left(
\begin{smallmatrix}
    \left(\begin{smallmatrix}z\\p\end{smallmatrix}\right)\\u
\end{smallmatrix}
\right)\in\mathcal X\times\mathbb R
\ \Bigg|\
\left(\begin{smallmatrix}z_1\\z_2\end{smallmatrix}\right)
\in\mathrm H^1(0,L;\mathbb R^2),\\
&(\rho^{-1}z_2)(0)=J_m^{-1}p_m,\quad
(\rho^{-1}z_2)(L)=J_l^{-1}p_l
\Bigg\}.
\end{align*}

The motor angle $y$ defined in \eqref{eq:pos_output} satisfies
\[
    \dot y(t)=y_v(t).
\]
Moreover, the load angle can be recovered from the state and the motor angle as
\[
    q(t,L)=y(t)+\int_0^L z_1(t,x)\,\mathrm dx.
\]
Let $\kappa:=mgl$ and define
\[
    \theta_l(\xi,y)
    :=
    y+\int_0^L z_1(x)\,\mathrm dx,
    \qquad
    \xi=\left(\begin{smallmatrix}z\\p\end{smallmatrix}\right)\in\mathcal X.
\]
The nonlinear perturbation $F:\mathcal X\times\mathbb R\to\mathcal X$ is
defined by
\[
    F(\xi,y)
    =
    \left(
    \begin{smallmatrix}
        0\\0\\0\\-\kappa\sin(\theta_l(\xi,y))
    \end{smallmatrix}
    \right).
\]
Hence the complete nonlinear system can be written as
\begin{subequations}\label{eq:abstract_system}
\begin{align}
    \dot\xi(t)
    &=
    A\&B
    \left(
    \begin{smallmatrix}\xi(t)\\u(t)\end{smallmatrix}
    \right)
    +F(\xi(t),y(t)),
    \label{eq:abstract_ODE_state}\\
    \dot y(t)&=y_v(t),
    \label{eq:abstract_position}\\
    y_v(t)
    &=
    C\&D
    \left(
    \begin{smallmatrix}\xi(t)\\u(t)\end{smallmatrix}
    \right).
    \label{eq:abstract_vel_out}
\end{align}
\end{subequations}
Let $A$ denote the restriction of $A\&B$ to zero input,
that is,
\[
A\xi:=A\&B
\left(\begin{smallmatrix}\xi\\0\end{smallmatrix}\right),
\qquad
D(A):=\{\xi\in\mathcal X:(\xi,0)\in D(S)\}.
\]
We show that $A$ generates a contraction semigroup.
Its domain is dense in $\mathcal X$: smooth interior fields
are dense in the distributed state space, and the endpoint
momenta can be matched by velocity fields supported in
arbitrarily small neighborhoods of the endpoints.

For $\xi=(z,p)^\top\in D(A)$, integration by parts gives
\[
\langle A\xi,\xi\rangle_{\mathcal X}
=-d_m(J_m^{-1}p_m)^2-d_l(J_l^{-1}p_l)^2\leq0.
\]
Thus, $A$ is dissipative.

To establish the surjectivity of $I-A$, let
$f=(f_1,f_2,f_m,f_l)^\top\in\mathcal X$.
Set $v:=\rho^{-1}z_2$. The equation $(I-A)\xi=f$
requires
$z_1=f_1+v'$, $z_2=\rho v$,
$p_m=J_mv(0)$, and $p_l=J_lv(L)$.
We determine $v\in\mathrm H^1(0,L)$ from
\begin{align}
&\int_0^L(\rho vw+kv'w')\,\mathrm dx
\nonumber\\
&\quad +(J_m+d_m)v(0)w(0)
 +(J_l+d_l)v(L)w(L)
\nonumber\\
&=\int_0^L(f_2w-kf_1w')\,\mathrm dx
+f_mw(0)+f_lw(L),
\label{eq:weak_resolvent}
\end{align}
for all $w\in\mathrm H^1(0,L)$.
The bilinear form on the left is bounded and coercive
on $\mathrm H^1(0,L)$, and the right-hand side
defines a bounded linear functional.
Hence the Lax--Milgram lemma yields a unique solution $v$.

Setting $z_1=f_1+v'$ gives, in the distributional sense,
\[
(kz_1)'=\rho v-f_2\in\mathrm L^2(0,L).
\]
Since $k>0$ is constant, we obtain
$z_1\in\mathrm H^1(0,L)$, and also
$z_2=\rho v\in\mathrm H^1(0,L)$.
Integration by parts in \eqref{eq:weak_resolvent}
now yields
\[
\begin{aligned}
kz_1(0)&=(J_m+d_m)v(0)-f_m,\\
kz_1(L)&=f_l-(J_l+d_l)v(L).
\end{aligned}
\]
These are precisely the endpoint equations of
$(I-A)\xi=f$. Consequently,
$\operatorname{im}(I-A)=\mathcal X$.
The Lumer--Phillips theorem shows that $A$
generates a contraction semigroup.

Moreover,
\[
A\&B\left(\begin{smallmatrix}\xi\\u\end{smallmatrix}\right)
=A\xi+Bu,\qquad
Bu=\left(\begin{smallmatrix}0\\0\\u\\0\end{smallmatrix}\right),
\]
and $C\xi=J_m^{-1}p_m$, $D=0$.
Since $B\in\mathcal L(\mathbb R,\mathcal X)$
and $C\in\mathcal L(\mathcal X,\mathbb R)$,
the operator $S$ defines a system node with
bounded input and output operators.

Furthermore, the mapping
\[
Qv:=
\left(\begin{smallmatrix}
0\\ \rho v\\ J_mv\\ J_lv
\end{smallmatrix}\right),
\qquad v\in\mathbb R,
\]
defines a bounded operator
$Q\in\mathcal L(\mathbb R,D(A))$, with
$D(A)$ equipped with its graph norm, and satisfies
$CQ=I$. In particular, $C$ is surjective.

For later use, we also record that $S$ is impedance passive. Indeed,
for $\xi=(z,p)^\top\in D(A)$ and $u\in\mathbb R$, integration by parts
and the boundary conditions give
\begin{align}
    \langle A\xi+Bu,\xi\rangle_{\mathcal X}
    &=uC\xi
      -d_m(J_m^{-1}p_m)^2
      -d_l(J_l^{-1}p_l)^2\nonumber\\
    &\leq uC\xi.
    \label{eq:linear_passivity}
\end{align}
In addition, $B=C^*$ with respect to the energy inner product on
$\mathcal X$. Thus, for any $\lambda>0$ and $u\neq0$, with
$\xi=(\lambda I-A)^{-1}Bu$, the scalar transfer function
$P(\lambda):=C(\lambda I-A)^{-1}B$ satisfies
\[
    uP(\lambda)u
    =\lambda\|\xi\|_{\mathcal X}^2
     -\langle A\xi,\xi\rangle_{\mathcal X}>0.
\]
Consequently, $P(\lambda)$ is coercive for every $\lambda>0$.

We next observe that the nonlinear system
\eqref{eq:abstract_system} is globally
well-posed. Indeed, the mapping
\[
    (\xi,y)\mapsto
    y+\int_0^L z_1(x)\,\mathrm dx
\]
is bounded and linear from $\mathcal X\times\mathbb R$ to $\mathbb R$.
More precisely, for $\xi,\widetilde\xi\in\mathcal X$
and $y,\widetilde y\in\mathbb R$,
\begin{align*}
\|F(\xi,y)\|_{\mathcal X}
&\leq \frac{\kappa}{\sqrt{J_l}},\\
\|F(\xi,y)-F(\widetilde\xi,\widetilde y)\|_{\mathcal X}
&\leq \frac{\kappa}{\sqrt{J_l}}
\left(
|y-\widetilde y|
+\sqrt{\frac Lk}\,
\|\xi-\widetilde\xi\|_{\mathcal X}
\right).
\end{align*}
Thus $F$ is globally bounded and globally Lipschitz.

On $\mathcal X_e:=\mathcal X\times\mathbb R$, define
\[
\mathcal A_e(\xi,y):=(A\xi,C\xi),
\qquad D(\mathcal A_e)=D(A)\times\mathbb R.
\]
Since $C$ is bounded, $\mathcal A_e$ generates a
$C_0$-semigroup by the bounded perturbation theorem.
Moreover, $\mathcal F(\xi,y):=(F(\xi,y),0)$ is globally
Lipschitz and bounded. Standard semilinear evolution
theory therefore yields a unique global mild solution
for every initial value and every
$u\in \mathrm{L}^1_{\rm loc}([0,\infty);\mathbb R)$;
see \cite[Ch.~6]{Pazy1983}.
We finally show that the nonlinear system
\eqref{eq:abstract_ODE_state}--\eqref{eq:abstract_vel_out} is dissipative
with respect to the supply rate $\langle u,y_v\rangle_{\mathbb R}$.
For this purpose, define the storage function
\begin{align*}
\mathcal E(\xi,y)
={}&
\tfrac12
\left[
\langle z,\mathcal Hz\rangle_{\mathrm L^2(0,L;\mathbb R^2)}
+
\langle p,\mathcal Jp\rangle_{\mathbb R^2}
\right]\\
&+
\kappa\left(1-\cos(\theta_l(\xi,y))\right),
\end{align*}
where $\xi=(z,p)^\top$. The last term is the gravitational potential energy.

We claim that
\begin{equation}
\mathcal E(\xi(t),y(t))-\mathcal E(\xi(0),y(0))
\leq
\int_0^t u(\tau)y_v(\tau)\,\mathrm d\tau.
\label{eq:Dissipativity_Ineq}
\end{equation}
It suffices to verify the corresponding differential inequality along
classical solutions.

Set $v_m:=J_m^{-1}p_m=y_v$ and
$v_l:=J_l^{-1}p_l$.
The equations for $z_1$ and $y$ imply
\begin{align*}
\tfrac{\mathrm d}{\mathrm dt}\theta_l(\xi,y)
&=\dot y+\int_0^L\dot z_1(x)\,\mathrm dx\\
&=v_m+(\rho^{-1}z_2)(L)
       -(\rho^{-1}z_2)(0)=v_l.
\end{align*}
Moreover,
$\langle F(\xi,y),\xi\rangle_{\mathcal X}
=-\kappa\sin(\theta_l(\xi,y))v_l$.
Using \eqref{eq:linear_passivity}, we obtain
along classical solutions
\begin{align*}
\tfrac{\mathrm d}{\mathrm dt}\mathcal E(\xi,y)
&=\langle A\xi+Bu+F(\xi,y),\xi\rangle_{\mathcal X}\\
&\quad+\kappa\sin(\theta_l)\dot\theta_l\\
&=uy_v-d_mv_m^2-d_lv_l^2\leq uy_v.
\end{align*}
Integration over $[0,t]$ proves
\eqref{eq:Dissipativity_Ineq}.
For general initial data in $\mathcal X_e$ and
$u\in\mathrm L^1_{\rm loc}$, approximate the data by
classical initial values and smooth inputs.
The corresponding solutions converge uniformly
on compact time intervals by the semilinear
variation-of-constants formula.
Since $C$ is bounded and the storage function is
continuous, the inequality passes to the limit.

\section{SAFETY-CONSTRAINED POSITION TRACKING}\label{sec:Pos_Tracking}

We now apply the position-funnel controller developed in
\cite{HastirReisPosition} to the nonlinear transmission system.
Let $y_{\rm ref}:\mathbb R_{\geq 0}\to\mathbb R$ be a prescribed
reference trajectory and define the tracking error
\[
    e(t):=y(t)-y_{\rm ref}(t).
\]
Since $\dot y=y_v$, its derivative is available from the measured
velocity output as
\[
    \dot e(t)=y_v(t)-\dot y_{\rm ref}(t).
\]

Let $\varphi:\mathbb R_{\geq0}\to\mathbb R_{>0}$ be a prescribed
funnel function. The control objective is
\begin{equation}
    \varphi(t)|e(t)|<1,\qquad t\geq0.
    \label{eq:position_funnel}
\end{equation}
The prescribed position funnel can directly encode safety constraints
on the motor angle. Indeed, let $y_{\min},y_{\max}:\mathbb R_{\geq0}
\to\mathbb R$ denote admissible lower and upper bounds and suppose that
the reference trajectory and the funnel function are chosen such that
\begin{equation}
    y_{\min}(t)
    \leq
    y_{\rm ref}(t)-\frac{1}{\varphi(t)}
    <
    y_{\rm ref}(t)+\frac{1}{\varphi(t)}
    \leq
    y_{\max}(t)
    \label{eq:safety_tube}
\end{equation}
for all $t\geq0$. Then \eqref{eq:position_funnel} immediately implies
\[
    y_{\min}(t)<y(t)<y_{\max}(t),
    \qquad t\geq0.
\]
Thus, the prescribed-performance funnel provides a direct mechanism
for enforcing safety constraints on the controlled motor angle.

We emphasize that the safety guarantee derived below concerns the
motor angle $y(t)=q(t,0)$. The load angle satisfies
\[
    q(t,L)=y(t)+\int_0^Lz_1(t,x)\,\mathrm dx,
\]
but is not directly constrained by the present funnel controller.
For design parameters $\alpha,\beta,\delta,\tau_f>0$, define the
recursive error
\begin{subequations}\label{eq:recursive_errors}
\begin{align}
    e_r(t)
    &:=
    \dot e(t)
    +
    \left(
        \frac{\alpha}
        {1-\varphi(t)^2e(t)^2}
        +
        \frac{\dot\varphi(t)}{\varphi(t)}
        +\delta
    \right)e(t),
    \label{eq:recursive_error}\\
    \eta(t)&:=\tau_f\varphi(t)e_r(t).
    \label{eq:normalized_recursive_error}
\end{align}
\end{subequations}
The feedback law is
\begin{equation}
    u(t)
    =
    u_{\rm ext}(t)
    -k_a(t)e_r(t)
    -
    \frac{\beta e_r(t)}
    {1-\eta(t)^2},
    \label{eq:funnel_controller}
\end{equation}
where $k_a$ is a supervised damping gain. Choose
$k_{a,0}\geq0$, $r_a\in(0,1)$ and $\Delta k,\nu>0$. As in
\cite{HastirReisPosition}, the gain is kept constant in the holding
mode. When the normalized recursive error reaches the monitoring level
$r_a$, the gain is increased with rate $\nu$ for the fixed time
$T_{\rm inc}:=\frac{\Delta k}{\nu}$
and is then held constant again. Thus, every activation increases
$k_a$ by precisely $\Delta k$. The procedure is continued recursively, with switching
times defined through the essential supremum of the
normalized recursive error, as in
\cite[Sec.~3.7]{HastirReisPosition}.

We consider the semilinear system
\begin{subequations}\label{eq:semilinear_system}
\begin{align}
\dot\xi(t)&=A\xi(t)+Bu(t)+F(\xi(t),y(t)),
\label{eq:semilinear_plant}\\
\dot y(t)&=C\xi(t).
\label{eq:integrated_output}
\end{align}
\end{subequations}
By the results of Section~\ref{sec:Model}, the linear
input--output system is impedance passive, $C$ is
surjective, and $F$ is bounded and globally Lipschitz.
We now extend the position-funnel analysis of
\cite{HastirReisPosition} to this semilinear model.
\begin{proposition}
\label{prop:nonlinear_funnel}
Assume that
$y_{\rm ref}\in \mathrm{W}^{3,\infty}(\mathbb R_{\geq0})$,
$u_{\rm ext}\in \mathrm{W}^{2,\infty}(\mathbb R_{\geq0})$,
$\varphi\in \mathrm{W}^{3,\infty}(\mathbb R_{\geq0})$, and
$\inf_{t\geq0}\varphi(t)>0$.
Let the controller parameters satisfy
\[
    \alpha,\beta,\delta,\tau_f,\Delta k,\nu>0,
    \qquad
    r_a\in(0,1),\qquad k_{a,0}\geq0.
\]
Let $\xi_0\in D(A)$ and $y_0\in\mathbb R$, and set
$e_0:=y_0-y_{\rm ref}(0)$.
Assume that
$\varphi(0)|e_0|<1$.
Define
\begin{align*}
    e_{r,0}
    &:=
    C\xi_0-\dot y_{\rm ref}(0)
    +\left(
       \frac{\alpha}{1-\varphi(0)^2e_0^2}
       +\frac{\dot\varphi(0)}{\varphi(0)}
       +\delta
     \right)e_0,\\
    \eta_0&:=\tau_f\varphi(0)e_{r,0},
\end{align*}
and assume that
\[
    |\eta_0|<r_a.
\]
Finally, define the initial input by
\[
    u_0
    :=
    u_{\rm ext}(0)-k_{a,0}e_{r,0}
    -\frac{\beta e_{r,0}}{1-\eta_0^2}.
\]
Since $D(S)=D(A)\times\mathbb R$, the condition $\xi_0\in D(A)$
implies $(\xi_0,u_0)\in D(S)$, and hence the initial compatibility
condition is satisfied.

Then the closed-loop system consisting of
\eqref{eq:semilinear_system},
\eqref{eq:recursive_errors} and \eqref{eq:funnel_controller}, and the
supervisor described above possesses a unique global trajectory with
$\xi\in \mathrm{W}^{1,\infty}_{\rm loc}(\mathbb R_{\geq0};\mathcal X)$, $y\in \mathrm{W}^{1,\infty}(\mathbb R_{\geq0})$,
$k_a\in \mathrm{W}^{1,\infty}_{\rm loc}(\mathbb R_{\geq0})$,
$u\in \mathrm{L}^\infty_{\rm loc}(\mathbb R_{\geq0})$, and
$e,\dot e,e_r,y_v\in \mathrm{L}^\infty(\mathbb R_{\geq0})$.
The evolution equations hold almost everywhere, with
$\xi(t)\in D(A)$ for almost every $t$. Furthermore,
\[
0\leq\dot k_a(t)\leq\nu\quad\text{a.e.},\qquad
0\leq k_a(t)\leq k_{a,0}+\nu t.
\]
Moreover, there exists $\varepsilon>0$ such that
\[
    \varphi(t)|e(t)|\leq1-\varepsilon,
    \qquad t\geq0,
\]
and, on every finite interval $[0,T]$, there exists
$\varepsilon_T>0$ such that
$|\eta(t)|\leq1-\varepsilon_T$
for almost every $t\in[0,T]$.
\end{proposition}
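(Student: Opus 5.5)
The plan follows the structure of the position-funnel proof in \cite{HastirReisPosition}: first local existence, then a priori bounds that keep the closed loop away from the singular sets $\varphi|e|=1$ and $|\eta|=1$, and finally extension to $[0,\infty)$. The new ingredient is the nonlinearity $F$. Because $F$ is globally bounded and globally Lipschitz, we treat it as an additive disturbance, and the linear estimates of \cite{HastirReisPosition} carry over with modified constants.

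\emph{Step 1 (local existence and uniqueness).} We write the closed loop as an abstract evolution equation on the open set
\[
\mathcal D:=\{(t,\xi,y,k_a): \varphi(t)|y-y_{\rm ref}(t)|<1,\ |\tau_f\varphi(t)e_r|<1\},
\]
where $e_r$ depends affinely and boundedly on $\xi$ through $C\xi$. On compact subsets of $\mathcal D$ the feedback $u$ is a locally Lipschitz function of $(t,\xi,y,k_a)$. Since $B$ and $C$ are bounded, and $F$ and $\mathcal F$ are Lipschitz, the classical-solution theory for semilinear equations with bounded input operators \cite[Ch.~6]{Pazy1983} gives a unique maximal solution on $[0,\omega)$. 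The supervisor is handled piecewise between switching times, exactly as in \cite[Sec.~3.7]{HastirReisPosition}. The data satisfy $\xi_0\in D(A)$ and $(\xi_0,u_0)\in D(S)$, and the functions $\varphi,y_{\rm ref},u_{\rm ext}$ are in $\mathrm W^{k,\infty}$. Hence $u$ is Lipschitz on compact subintervals, the solution is strong, $\xi\in \mathrm W^{1,\infty}_{\rm loc}$, and $\xi(t)\in D(A)$ almost everywhere. The gain bounds $0\le\dot k_a\le\nu$ and $k_a\le k_{a,0}+\nu t$ follow directly from the construction of the supervisor.

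\emph{Step 2 (bounding $e$ given $e_r$).} The definition \eqref{eq:recursive_error} is a scalar ODE for $e$. Introduce $w:=\varphi e$. Then
\[
\dot w=\varphi e_r-\Big(\tfrac{\alpha}{1-w^2}+\delta\Big)w .
\]
Whenever $|\varphi e_r|\le M$, the term $\alpha w/(1-w^2)$ dominates near $|w|=1$, and a comparison argument yields $|w(t)|\le 1-\varepsilon$ with $\varepsilon$ depending only on $M$, $\alpha$, $\delta$ and $|w(0)|$. Here $M$ can be taken as $1/\tau_f$, because $|\eta|<1$ on $[0,\omega)$. This step is unchanged from the linear case and gives the uniform $\varepsilon$ claimed in the statement.

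\emph{Step 3 (bounding $\eta$ away from $1$ on $[0,T]$).} This is the main obstacle. We differentiate $e_r$. The derivative $\dot e_r$ contains $\ddot e=C\dot\xi-\ddot y_{\rm ref}$, where
\[
C\dot\xi=C(A\xi+Bu+F)=CA\xi+J_m^{-1}u+CF .
\]
The term $CA\xi$ contains the boundary value $kz_1(0)$, which is not bounded on $\mathcal X$. Rather than estimating $\dot e_r$ pointwise, we use energy arguments. The dissipation inequality \eqref{eq:Dissipativity_Ineq} gives
\[
\mathcal E(t)-\mathcal E(0)\le \int_0^t u\,y_v .
\]
The feedback $u$ contains $-\beta e_r/(1-\eta^2)-k_ae_r$, and $y_v=e_r+\dot y_{\rm ref}-(\ldots)e$, where the factor $(\ldots)$ is bounded by Step 2. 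Hence
\[
u\,y_v\le -\tfrac{\beta e_r^2}{1-\eta^2}+c_1\tfrac{|e_r|}{1-\eta^2}+c_2 .
\]
Together with $|e_r|<1/(\tau_f\varphi)$, this bounds $\mathcal E$, and thus $\|\xi\|_{\mathcal X}$, on $[0,T]$. Note that $\kappa(1-\cos\theta_l)\ge0$, and the bounded $F$ adds only a constant. A bound on $\|\xi\|$ alone does not control $1/(1-\eta^2)$. For that we follow \cite{HastirReisPosition} and apply the same dissipativity argument to the differentiated system for $\dot\xi$. This is where $y_{\rm ref},\varphi\in\mathrm W^{3,\infty}$, $u_{\rm ext}\in\mathrm W^{2,\infty}$ and $\xi_0\in D(A)$ are used. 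The nonlinearity contributes $F'(\xi,y)[\dot\xi,\dot y]$, which is bounded by $\tfrac{\kappa}{\sqrt{J_l}}(|y_v|+\sqrt{L/k}\,\|\dot\xi\|)$. This is a linear-growth term, handled by Gronwall on finite intervals, and it is the reason the bound is only valid on $[0,T]$. The coercivity of $P(\lambda)$, equivalently the surjectivity of $C$ with right inverse $Q$, is used as in the linear proof to convert the estimate into a bound on $\int\dot e_r\,\eta/(1-\eta^2)^2$. A contradiction argument near $|\eta|=1$, using the strong damping term $\beta e_r/(1-\eta^2)$ in $u$, then gives $|\eta|\le 1-\varepsilon_T$ almost everywhere.

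\emph{Step 4 (conclusion).} Steps 2 and 3 show that the solution stays in a compact subset of $\mathcal D$ on $[0,\min\{T,\omega\})$. Therefore $\omega>T$ for every $T$, so $\omega=\infty$. Global boundedness of $e$, $\dot e$ and $e_r$ follows from $\varphi|e|<1$, $|\eta|<1$ and $\inf\varphi>0$. Boundedness of $y_v=\dot e+\dot y_{\rm ref}$ follows from these, and $y\in\mathrm W^{1,\infty}$ follows because $y_{\rm ref}$ is bounded.
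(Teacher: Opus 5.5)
\textbf{Verdict: your Steps~1, 2 and~4 are sound, but Step~3, where the proposition is actually decided, has a genuine gap.} Step~1 is simpler than the paper's construction, and this is legitimate because $B$ and $C$ are bounded here. One wording fix: ``compact'' should read ``bounded and at positive distance from $|\eta|=1$ and $\varphi|e|=1$''.

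\textbf{First problem: the zeroth-order bound does not follow.} Put $g:=y_v-e_r$, which is bounded by Step~2. Your estimate reads $u\,y_v\le \frac{|e_r|(c_1-\beta|e_r|)}{1-\eta^2}+c_2$ with $c_1=\beta\sup|g|$. As $|\eta|\to1$ we have $|e_r|\approx 1/(\tau_f\varphi)$. So the singular term is nonpositive there only when $1/(\tau_f\varphi)\ge\sup|g|$. That is not a hypothesis, since $g$ contains $\dot y_{\rm ref}$. Otherwise you need $\int_0^T(1-\eta^2)^{-1}\,\mathrm dt<\infty$, which is what you are trying to prove.

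\textbf{Second, more serious problem: the differentiated balance does not close by Gronwall.} Write $\frac{\beta e_r}{1-\eta^2}=\frac{\beta}{\tau_f\varphi}\gamma(\eta)$ with $\gamma(s)=s/(1-s^2)$. Also write $C\dot\xi=\frac{\dot\eta}{\tau_f\varphi}+b$, where $b=\dot g-\frac{\dot\varphi}{\varphi}e_r$ is bounded. Then the feedback contributes to $\frac12\frac{\mathrm d}{\mathrm dt}\|\dot\xi\|_{\mathcal X}^2\le\dot u\,C\dot\xi+\dots$ the terms
\[
-\tfrac{\beta}{\tau_f^2\varphi^2}\gamma'(\eta)\dot\eta^2-\tfrac{\beta b}{\tau_f\varphi}\gamma'(\eta)\dot\eta+\tfrac{\beta\dot\varphi}{\tau_f\varphi^2}\gamma(\eta)\,C\dot\xi .
\]
Only the first term has a sign. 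The other two carry $\gamma(\eta)$ and $\gamma'(\eta)\sim(1-\eta^2)^{-2}$, which is exactly the quantity to be bounded. No Gronwall argument closes until $\gamma(\eta)$ is tied to the energy.

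So the obstruction is the feedback itself, not $CA\xi$ or $F'$, which are harmless. Your ``bound on $\int\dot e_r\eta/(1-\eta^2)^2$'' followed by a ``contradiction argument'' does not supply this link. A side point: coercivity of $P(\lambda)$ is not equivalent to the existence of the right inverse $Q$; the two play different roles.

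\textbf{The missing ingredient is a pointwise bound of $\gamma(\eta)$ by the state.} The paper obtains it by passing to the transformed state $\widehat z$. There $\eta=C\widehat z$ exactly, so the singular feedback is a monotone function of the output and the cross terms above never arise. Maximal-monotone theory then gives $\widehat z\in\mathrm W^{1,\infty}$ on finite intervals. The resolvent identity with coercive $P(\mu)$ then bounds $\gamma(\eta)$.

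\textbf{How to repair your route without the transformation.} Since $B$ and $C$ are bounded, $(\mu I-A)\xi=\mu\xi-\dot\xi+Bu+F$ gives, at each $t$,
\[
\tfrac{\beta P(\mu)}{\tau_f\varphi}\,\gamma(\eta)=C(\mu I-A)^{-1}\bigl(\mu\xi-\dot\xi+F\bigr)+P(\mu)\bigl(u_{\rm ext}-k_ae_r\bigr)-C\xi .
\]
Hence $|\gamma(\eta)|\le c_T\bigl(1+\|\xi\|_{\mathcal X}+\|\dot\xi\|_{\mathcal X}\bigr)$ on $[0,T]$. Combine this with $\gamma'(s)\le 1+3\gamma(s)^2$ and Young's inequality against the good term $-\gamma'(\eta)\dot\eta^2$. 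This bounds the $\gamma(\eta)g$ term of the zeroth-order balance, and both cross terms, by $c_T\bigl(1+\|\xi\|_{\mathcal X}^2+\|\dot\xi\|_{\mathcal X}^2\bigr)$.

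The coupled energies of $\xi$ and $\dot\xi$ then satisfy a linear Gronwall inequality on $[0,\min\{T,\omega\})$. The energy identity for $\dot\xi$ is justified mode-wise, since the closed-loop right-hand side is $C^1$ within each supervisor mode. The displayed identity then yields $|\eta|\le 1-\varepsilon_T$. As written, however, Step~3 does not contain this argument.
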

\begin{proof}
We show that the proof of \cite{HastirReisPosition} is stable under
the present gravitational perturbation. Set
\[
    \chi:=\varphi e,
    \qquad
    h_\varphi:=\frac{\dot\varphi}{\varphi}+\delta,
\]
and use the right inverse $Q$ constructed in
Section~\ref{sec:Model}. Fix $\mu>0$ and define
\begin{align*}
P(\mu)&:=C(\mu I-A)^{-1}B,\\
L_\mu&:=(\mu I-A)^{-1}B-QP(\mu),\\
G_\mu&:=AL_\mu+B.
\end{align*}
Since $CQ=I$ and $B$ is bounded, we have
$L_\mu\in\mathcal L(\mathbb R,D(A))$ and
\[
\left(
\begin{smallmatrix}L_\mu v\\v\end{smallmatrix}
\right)\in D(S),\qquad
S\left(
\begin{smallmatrix}L_\mu v\\v\end{smallmatrix}
\right)
=
\left(
\begin{smallmatrix}G_\mu v\\0\end{smallmatrix}
\right).
\]
Thus $L_\mu$ is a zero-output lifting, in the sense of
\cite[Lem.~3.3]{HastirReisPosition}.
Introduce the transformed state
\[
    \widehat z
    :=\tau_f\varphi
       \bigl(\xi-L_\mu u_{\rm ext}-Q\dot y_{\rm ref}\bigr)
       +\tau_fQh_\varphi\chi
       +\tau_f\alpha Q\gamma(\chi),
\]
where
\[
    \gamma(s):=\frac{s}{1-s^2},\qquad |s|<1.
\]
The algebraic system-node relation obtained from this transformation
is exactly the same as in \cite{HastirReisPosition}. In particular,
the gravitational term does not enter this relation, since it acts
only in the state equation and leaves the input--output relation of
the linear system node unchanged.

It remains to consider its contribution to the transformed state
equation. For $R\in(0,1)$, let $\Gamma_R$ be the smooth truncation of $\gamma$
used in \cite{HastirReisPosition}. Recall that $\Gamma_R$ is globally
Lipschitz, continuously differentiable, and that its derivative is
globally bounded and globally Lipschitz. On the
truncated problem, the inverse state transformation is
\begin{align*}
    \Xi_R(t,\widehat z,\chi)
    :={}&
    \frac{1}{\tau_f\varphi(t)}
    \bigl(
        \widehat z-\tau_fQh_\varphi(t)\chi
        -\tau_f\alpha Q\Gamma_R(\chi)
    \bigr)\\
    &+L_\mu u_{\rm ext}(t)+Q\dot y_{\rm ref}(t).
\end{align*}
Hence the gravitational perturbation contributes the term
\begin{align}
    N_R(t,\widehat z,\chi)
    :={}&
    \tau_f\varphi(t)
    F\left(
        \Xi_R(t,\widehat z,\chi),
        y_{\rm ref}(t)+\varphi(t)^{-1}\chi
    \right)
    \label{eq:transformed_perturbation}
\end{align}
to the transformed state equation.

Let $w=(\widehat z,\chi,k_a)$ and define
\[
\mathcal N_R(t,w):=
\begin{pmatrix}
N_R(t,\widehat z,\chi)\\0\\0
\end{pmatrix}.
\]
Since the first distributed component of $Qv$
vanishes for every $v\in\mathbb R$, the load angle
appearing in \eqref{eq:transformed_perturbation}
reduces to
\begin{align*}
\Theta(t,\widehat z,\chi)
={}&y_{\rm ref}(t)+\frac{\chi}{\varphi(t)}+\frac{1}{\tau_f\varphi(t)}
\int_0^L\widehat z_1(x)\,\mathrm dx\\
&\quad+\int_0^L(L_\mu u_{\rm ext}(t))_1(x)\,\mathrm dx.
\end{align*}
Consequently,
\[
N_R(t,\widehat z,\chi)
=\tau_f\varphi(t)
\begin{pmatrix}
0\\0\\0\\-\kappa\sin\Theta(t,\widehat z,\chi)
\end{pmatrix}.
\]
In particular, this perturbation is independent
of the truncation $\Gamma_R$.

Using the Lipschitz continuity of the sine function
and the energy norm, we obtain
\begin{align*}
&\|\mathcal N_R(t,w)-\mathcal N_R(t,\widetilde w)\|\\
&\quad\leq\frac{\kappa}{\sqrt{J_l}}
\left(
\sqrt{\frac Lk}
\|\widehat z-\widetilde{\widehat z}\|_{\mathcal X}
+\tau_f|\chi-\widetilde\chi|
\right).
\end{align*}
Thus $\mathcal N_R$ is globally Lipschitz in the
state, uniformly in time. Moreover,
\[
\|\mathcal N_R(t,w)\|
\leq\frac{\tau_f\|\varphi\|_\infty\kappa}{\sqrt{J_l}}.
\]
The regularity assumptions on the prescribed data
also imply that the time dependence is Lipschitz
continuous uniformly on bounded state sets.

Decompose
\[
\mathcal N_R(t,w)
=\overline{\mathcal N}_R(t,w)+n_R(t),
\]
where
\[
\begin{aligned}
\overline{\mathcal N}_R(t,w)
&:=\mathcal N_R(t,w)-\mathcal N_R(t,0),\\
n_R(t)&:=\mathcal N_R(t,0).
\end{aligned}
\]
Then $\overline{\mathcal N}_R(t,0)=0$, and
$n_R$ belongs to $\mathrm{W}^{1,\infty}$ with values
in the transformed product space.

The maximally monotone principal operator of the
truncated evolution problem in
\cite{HastirReisPosition} remains unchanged.
Only its globally Lipschitz perturbation is
augmented by $\overline{\mathcal N}_R$, while
$n_R$ is added to the prescribed forcing term.
The resulting terms satisfy the same regularity
requirements as in
\cite[Prop.~3.6]{HastirReisPosition}.
Hence the truncated evolution argument applies
in both fixed supervisor modes.
The pointwise domain property and the continuation
of the fixed-mode trajectories follow by the
arguments of
\cite[Lem.~3.7 and Prop.~3.11]{HastirReisPosition}.
The a priori position-funnel estimate is unchanged as well. Indeed,
from the definition of the recursive error one obtains exactly
\[
    \dot\chi
    =
    \tau_f^{-1}\eta
    -\alpha\frac{\chi}{1-\chi^2}
    -\delta\chi.
\]
This relation is independent of the plant dynamics and yields the same
uniform separation of $\chi$ from the boundary of $(-1,1)$ as in
\cite{HastirReisPosition}.

Finally, the algebraic system-node relation between $\widehat z$, the
normalized recursive error $\eta$, and the feedback input is unchanged.
In particular, the gravitational perturbation does not occur in the
resolvent identity used to estimate $\eta$. On every finite time
interval, the truncated evolution yields
$\widehat z\in \mathrm{W}^{1,\infty}$,
$\widehat f\in \mathrm{L}^\infty$,
where $\widehat f$ denotes the state component in the corresponding
system-node relation. Hence the right-hand side of the resolvent
identity is bounded exactly as in \cite{HastirReisPosition}.
Impedance passivity and coercivity of the transfer operator therefore
give the same estimate for $\gamma(\eta)$ and yield a strict
finite-horizon margin
$|\eta(t)|\leq1-\varepsilon_T$
for almost every $t\in[0,T]$.

The uniform position-funnel margin and the strict
finite-horizon recursive-error margin allow us to
remove the truncation by the first-exit argument
of \cite{HastirReisPosition}.
The pointwise domain property permits restarting
the evolution at every supervisor switch.
Since every gain-increase phase has the fixed
duration $T_{\rm inc}>0$, switching times cannot
accumulate in finite time. The recursive supervisor
construction therefore yields a unique global
closed-loop trajectory.

It remains to establish the asserted global bounds.
The position-funnel estimate and
$\inf_{t\geq0}\varphi(t)>0$ imply
$e\in \mathrm{L}^\infty(\mathbb R_{\geq0})$.
Moreover, $|\eta(t)|<1$ almost everywhere gives
\[
|e_r(t)|\leq
\frac{1}{\tau_f\inf_{s\geq0}\varphi(s)}.
\]
Since $\dot\varphi/\varphi$ is bounded and the
position error stays uniformly separated from
the funnel boundary, \eqref{eq:recursive_error}
implies $\dot e\in \mathrm{L}^\infty(\mathbb R_{\geq0})$.
Consequently,
\[
y_v=\dot e+\dot y_{\rm ref}
\in \mathrm{L}^\infty(\mathbb R_{\geq0}),
\]
and $y=y_{\rm ref}+e$ belongs to
$\mathrm{W}^{1,\infty}(\mathbb R_{\geq0})$.
Finally, the supervisor satisfies
$\dot k_a\in\{0,\nu\}$ almost everywhere,
which yields the stated gain bounds.
\end{proof}

\section{NUMERICAL SIMULATION}\label{sec:Simulation}

We illustrate the closed-loop behavior of the flexible
transmission system over a time horizon of $10$ seconds.
The shaft is discretized using $N=20$ continuous
piecewise-linear finite elements. The resulting
semidiscrete model takes the form
\[
M\ddot{\boldsymbol q}+D\dot{\boldsymbol q}
+K\boldsymbol q+\kappa\sin(q_N)\boldsymbol e_N
=u\boldsymbol e_0,
\]
where $M$, $D$, and $K$ denote the mass, damping, and
stiffness matrices, respectively. The endpoint inertias
and damping are incorporated into $M$ and $D$.
The motor and load angles are $q_0$ and $q_N$.
The discretization preserves the mechanical energy
balance of the original system.

We choose $L=\rho=1$, $k=20$, $J_m=0.3$,
$J_l=0.5$, $d_m=1$, $d_l=0.75$, and $\kappa=3$.
The reference trajectory is obtained by piecewise quintic
interpolation of the values
$0,1,-1,0.7,-0.6,0.9,-0.4,0,0$ at the times
$0,1.3,2.8,4,5.4,6.8,8.3,9.5,10$,
respectively, with vanishing first and second
derivatives at the interpolation points.
The funnel function is prescribed by
\[
\varphi(t)^{-1}=0.3+0.6e^{-t/1.5}.
\]
The reference and funnel are chosen such that
$|y_{\rm ref}(t)|+\varphi(t)^{-1}<1.6$,
which enforces the motor-angle safety constraint
$|y(t)|<1.6$.

The controller parameters are
$\alpha=\beta=1$, $\delta=0.5$, $\tau_f=0.25$,
$k_{a,0}=0$, $r_a=0.8$, $\Delta k=2$, and $\nu=10$,
with $u_{\rm ext}=0$.
Starting from rest, we integrate the semidiscrete system
using MATLAB's \texttt{ode15s}, with relative and absolute
tolerances $10^{-7}$ and $10^{-9}$ and a maximum time
step of $10^{-2}$. Supervisor switching times are
determined by event detection.

\begin{figure}[t]
    \centering
    \includegraphics[width=0.96\columnwidth]
    {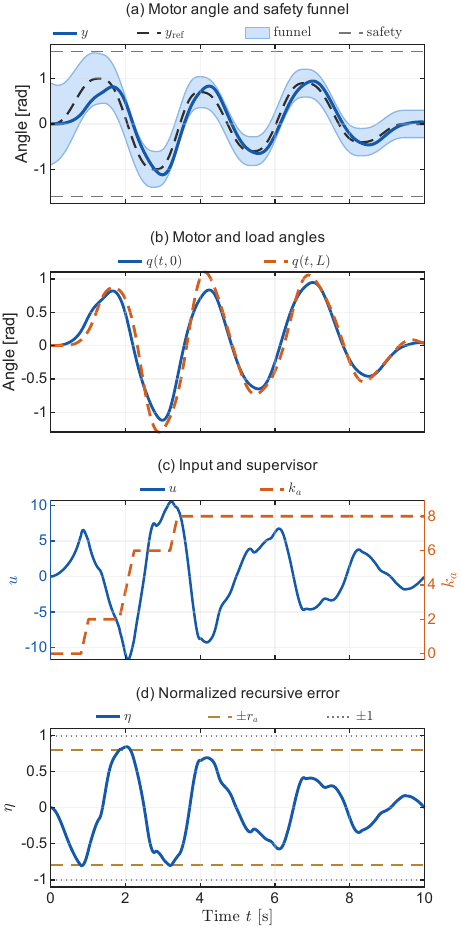}
    \caption{Closed-loop simulation:
(a) motor angle, reference, shaded position funnel,
and safety limits; (b) motor and load angles;
(c) motor torque and supervised gain;
(d) normalized recursive error with monitoring levels
$\pm r_a$ and funnel limits $\pm1$.}
    \label{fig:transmission_simulation}
\end{figure}

As shown in Fig.~\ref{fig:transmission_simulation},
the motor angle remains strictly within the prescribed
position funnel. The maximum normalized position error
is $0.834$, while the normalized recursive error
satisfies $\max_t|\eta(t)|=0.845$.
The supervisor is activated four times, resulting
in a final gain of $k_a=8$.
The maximum applied motor torque is $11.69$.
The load angle exhibits additional motion due to
the shaft flexibility, with a maximum motor-to-load
angular difference of $0.334$ rad.

\section{CONCLUSION}\label{sec:Ccl}

We studied safety-constrained motor-angle tracking for a flexible
transmission system with a gravitationally loaded inertia. The
coupled PDE--ODE model was formulated as a semilinear system with an
impedance-passive linear part and a bounded, globally Lipschitz
perturbation due to gravity. We established an energy dissipation
inequality for the nonlinear system and showed that supervised
position-funnel feedback yields a unique global closed-loop
trajectory, preserves a uniform position-funnel margin, and keeps
the velocity tracking error globally bounded. When
the position funnel is contained in an admissible motor-angle interval,
this yields a safety guarantee for the motor angle. The present
controller does not, however, directly constrain the load angle.




\bibliographystyle{IEEEtran} 
\bibliography{References}

\end{document}